\documentclass[preprint,3p,review,12pt]{elsarticle}

\usepackage{amssymb}
\usepackage{amsmath}
\usepackage{amsmath}

\usepackage{graphicx}

\usepackage{xcolor}

\usepackage{wasysym}

\usepackage{float}

\usepackage{utfsym}
\usepackage{amsthm}
\usepackage{thmtools}
\usepackage{lmodern}

\newtheorem{theorem}{Theorem}
\newtheorem{definition}{Definition}
\newtheorem{lemma}{Lemma}

\journal{Discrete Applied Mathematics}

\begin{document}

\begin{frontmatter}



\title{King Chasing Problem in Chinese Chess is NP-hard}


\author[iie,scs]{Chao Li} 
\ead{lichao1989@iie.ac.cn}
\affiliation[iie]{organization={Institute of Information Engineering, Chinese Academy of Sciences},
            city={Beijing},
            postcode={100085}, 
            country={China}}

\affiliation[scs]{organization={School of Cyberspace Security, University of Chinese Academy of Sciences},
            city={Beijing},
            postcode={100049}, 
            country={China}}

\author[gdmcfd]{Zhujun Zhang} 
\ead{zhangzhujun1988@163.com}
\affiliation[gdmcfd]{organization={Big Data Center of Fengxian District},
            city={Shanghai},
            postcode={201499}, 
            country={China}}

\author[gdufs]{Chao Yang\corref{cor1}}
\ead{yangchao@gdufs.edu.cn}
\cortext[cor1]{Corresponding author}
\affiliation[gdufs]{organization={School of Mathematics and Statistics, Guangdong University of Foreign Studies},
            city={Guangzhou},
            postcode={510006}, 
            country={China}}

\begin{abstract}
We prove that king chasing problem in Chinese Chess is NP-hard when generalized to n×n boards. `King chasing' is a frequently-used strategy in Chinese Chess, which means that the player has to continuously check the opponent in every move until finally checkmating the opponent's king. The problem is to determine which player has a winning strategy in generalized Chinese Chess, under the constraints of king chasing. Obviously, it is a sub-problem of generalized Chinese Chess problem. We prove that king chasing problem in Chinese Chess is NP-hard by reducing from the classic NP-complete problem 3-SAT.
\end{abstract}



\begin{keyword}
Computational complexity \sep NP-hardness, Chinese Chess \sep King Chasing




\end{keyword}

\end{frontmatter}



\section{Introduction}
\label{sec:introduction}
Since the 1980s, the computational complexity of board games has been extensively studied by scholars. The computational complexity of many classic board games problem were determined. In 1980,  Lichtenstein and Sipser proved that planar Generalized Geography (GG) is PSPACE-complete by reducing the True Quantified Boolean Formula (TQBF) problem to it. Then they reduced GG to Go, demonstrating that Go is PSPACE-hard \cite{Lichtenstein1980}. In 1981,  Fraenkel and  Lichtenstein proved that Chess is EXPTIME-complete by reducing $ G_{3}$ (a Boolean game) to Chess \cite{Fraenkel1981}. In 2022,  Brunner et al. demonstrated that `retrograde' problem and `helpmate' problem in Chess are PSPACE-complete by reducing Subway Shuffle to these two problems \cite{Brunner2020}. In 1980,  Reisch proved that Gobang is PSPACE-complete \cite{Reisch1980}. In 1994, Iwata and  Kasai showed that Othello is PSPACE-complete by reducing Geography (on bipartite graphs with maximum degree 3) to the Othello game \cite{Iwata1994}. In 2024,  Burke and  Tennenhouse proved that Forced-Capture Hnefatafl is PSPACE-hard by reducing Positive Conjunctive Normal Form (Positive-CNF) to the game \cite{Burke2024}.

Chinese Chess is a classic two-player chess-like game, and it is said to have originated in China's Warring States Period (about 2,000 years ago)\cite{ZhouChenliang2024}. Chinese Chess is now one of the most popular chess-like games in China, Vietnam and other countries. King chasing in Chinese Chess is that player has to continuously check the opponent in every move until finally checkmating the opponent's king. King chasing is a frequently-used strategy in Chinese Chess. In Shogi (Japanese Chess), king chasing problem is called Tsume-shogi, which is the most popular genre of problem in Shogi. The king chasing problem in Chinese Chess can be stated more formally as below.

\begin{definition}[The king chasing problem in Chinese Chess]\label{king chasing problem}\ \par

\noindent {\bf Input.} A Chinese Chess game position that the player(take Red as an example) has to continuously check the opponent in every move.

\noindent {\bf Output.} Can Red wins?
\end{definition}

Obviously, The king chasing problem in Chinese Chess is a sub-problem of generalized Chinese Chess problem. From the perspective of computational complexity, the sub-problem is at least not more difficult. Therefore, even if Zhang has proved Chinese Chess problem to be EXPTIME-complete \cite{ZhangZhujun2019}, it does not indicate the computational complexity of king chasing problem in Chinese Chess.

The main result of this paper is the following theorem.

\begin{theorem}\label{thm_main}
  The king chasing problem in Chinese Chess is NP-hard.
\end{theorem}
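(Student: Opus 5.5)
We reduce from 3-SAT, as the abstract announces. Given a formula $\varphi$ with variables $x_1,\dots,x_n$ and clauses $C_1,\dots,C_m$, each with three literals, we build in polynomial time a position on an $N\times N$ board with $N$ polynomial in $n+m$. Red must check on every move, and the position is built so that Red can force checkmate if and only if $\varphi$ is satisfiable. The plan has three parts.

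\textbf{The chase path and variable gadgets.} The Black king is confined to a long corridor, for example a generalized palace or a lane walled by Black's own pieces and immobile blockers. The checking pieces are arranged so that every Red check leaves the king exactly one escape square. This drives the king along a predetermined route, which keeps the analysis of Black's replies trivial. Along this route we place $n$ variable gadgets, in order. In gadget $i$, Red has exactly two legal checking moves:
\begin{itemize}
\item a check by a Chariot on one file, which fixes $x_i$ true;
\item a check by a Cannon, whose screen lies on a different file, which fixes $x_i$ false.
\end{itemize}
Whichever check Red makes moves a Red piece. That move either opens or closes a Cannon screen, or a Horse's blocking point (the ``horse leg''), on designated long-range lines that run to the clause region. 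So each choice leaves a permanent trace: the set of open lines encodes the truth assignment.

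\textbf{Clause gadgets and the final mating net.} After the variable section, the king enters a clause section. Clause $C_j$ corresponds to one step of the chase. There, Red can continue checking only if at least one of three lines, one per literal of $C_j$, was left open, i.e.\ is a usable Cannon screen or Chariot line. If all three lines are closed, Red has no check left and loses under the king-chasing rule. We can also make this explicit by giving Black a mating threat once Red's checks run out. After the last clause, a fixed mating pattern finishes the king; for instance a Horse and Chariot mate, or the ``facing kings'' rule used with a Chariot. It follows that a satisfying assignment gives a winning sequence of checks. Conversely, any forced win induces an assignment that satisfies every clause.

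\textbf{Main obstacle: locality.} Chinese Chess pieces interact heavily, so the hardest step is keeping the gadgets isolated:
\begin{itemize}
\item Cannon screens can be reused by unintended lines.
\item The flying-general rule can create unexpected checks.
\item Black may interpose or capture instead of moving the king.
\item Red may find \emph{unintended} checks, such as revisiting a variable gadget or making a discovered check from elsewhere.
\end{itemize}
To handle this, I would space the gadgets far apart on distinct rows and files, so that each long-range line crosses only its intended gadgets. I would surround every gadget with Black Elephants and Advisors and with Red pawns that have already crossed the river; these pieces are mobile only inside small regions, so they act as blockers. I would also make every interposition square either occupied or attacked. The verification would then be a case analysis per gadget, showing that at each step Red's only checks are the intended ones and Black's only legal reply is the forced king move. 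Both directions of the equivalence follow from this, and together with the polynomial size of the construction they give Theorem~\ref{thm_main}.
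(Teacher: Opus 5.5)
Your outline has the same architecture as the paper's reduction: 3-SAT, a one-way forced king walk, a binary choice of checking piece in each variable gadget, a clause step where Red keeps a check only if some literal is satisfied, a standing mate threat for when Red runs out of checks (the paper's mate-in-one gadget), and a final mate. As a proof, though, it has a genuine gap. Every property that carries the reduction is stated as a design goal and deferred to an unspecified case analysis, and one essential mechanism is missing altogether: \emph{fan-out}.

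A variable may occur in many clauses. Yet your variable gadget moves one piece and opens or closes a single line. Each time such a line is used for a check, a Red piece travels along it and stays there, which changes the configuration for later clauses and creates new checks. The paper handles this with per-occurrence resources:
\begin{itemize}
\item For each occurrence of a literal there is a black ``key rook'' on the file of that literal's variable cannon, plus a dedicated red rook that is released only once that key rook has been captured.
\item Only the rook actually moved in the variable gadget frees its variable cannon. That cannon then sweeps down its file, capturing the key rooks one at a time, each capture being a check as the king passes that rank.
\item In the clause gadget, the released red rook supplies the check, so the literal cannon need not move.
\end{itemize}
The OR is also built differently from your ``one of three lines is open''. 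The three literal cannons are blockers on a file between a red rook and a black rook, and each false literal forces Red to spend its literal cannon. Only when all three are gone can the black rook capture the red rook that must give the check at the end of the clause gadget, leaving Red without a check.

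Your remedies for locality also fall short. Spacing cannot keep literal lines away from the king's corridor, since they must reach it to give check. Moreover, the real danger is deliberate deviation, not accidental interaction. In the paper's example, Red declines to use a freed variable cannon when intended, spends a literal cannon instead, and later brings the saved cannon into the channel for an unintended mate. Red then wins even though its chosen assignment does not satisfy the formula; the force-to-bottom gadget exists precisely to rule this out.

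Making interposition squares ``occupied or attacked'' is the wrong criterion under king chasing. An interposition on an attacked square still parries the check, and Red's recapture must itself be a check. A cannon check can also be parried by moving its screen, which is why the paper uses immobile screens and even replaces an elephant by a horse in the turn gadget.

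You also still need concrete devices for two things. One is sustaining a polynomially long one-way chase; the paper uses two directional cannons that leapfrog each other capturing a row of black horses, with the trailing cannon guarding the square just vacated. The other is switching off that default check exactly where a choice must be forced; the paper uses black cannons that capture an advancing directional cannon, which destroys the alternation. Without these constructions, neither direction of the equivalence is established.
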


We shall prove Theorem \ref{thm_main} in the section \ref{sec:the reduction} by reduction from the 3-SAT which is a classic NP-complete problem. NP-hardness of many problems and games are proven by reducing 3-SAT to them \cite{Aloupis2015}.

\begin{definition}[3-SAT problem]\ \par

\noindent {\bf Input.} A 3-CNF (conjunctive normal form) formula that a Boolean formula is in conjunctive normal form, and all the clauses have three literals.

\noindent {\bf Output.} Is there an assignment to the variables such that the formula is true?
\end{definition}

\begin{theorem}[\cite{Introduction-to-the-Theory-of-Computation}]\label{thm_sat}
  3-SAT is NP-complete.
\end{theorem}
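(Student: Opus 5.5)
The statement has two parts: 3-SAT lies in NP, and every problem in NP reduces to it in polynomial time. I would prove them separately. The hardness part goes through CNF-SAT as an intermediate problem, in the style of the Cook--Levin theorem as presented in \cite{Introduction-to-the-Theory-of-Computation}.

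\emph{Membership in NP.} A certificate is a truth assignment to the $n$ variables of the formula, which has length $n$. The verifier checks each of the $m$ clauses for a true literal, which takes time linear in the size of the formula. So 3-SAT $\in$ NP.

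\emph{NP-hardness of CNF-SAT.} Let $L \in$ NP be decided by a nondeterministic Turing machine $N$ in time $n^k$. Given an input $w$ of length $n$, I would build the standard $n^k \times n^k$ tableau of configurations. For each cell $(i,j)$ and each symbol $s$ of the tape alphabet, state set and delimiter, there is a Boolean variable $x_{i,j,s}$ saying that cell $(i,j)$ holds $s$. The formula is $\phi = \phi_{\mathrm{cell}} \wedge \phi_{\mathrm{start}} \wedge \phi_{\mathrm{move}} \wedge \phi_{\mathrm{accept}}$, where the four parts say the following.
\begin{itemize}
\item $\phi_{\mathrm{cell}}$: every cell holds exactly one symbol.
\item $\phi_{\mathrm{start}}$: the first row encodes the start configuration on $w$.
\item $\phi_{\mathrm{accept}}$: some cell contains the accepting state.
\item $\phi_{\mathrm{move}}$: every $2\times 3$ window of the tableau is one of the finitely many windows that are legal for the transition relation of $N$.
\end{itemize}
The formula $\phi$ has size $O(n^{2k})$, can be produced in polynomial time, and can be put in CNF directly. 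The key lemma is the locality claim: if the first row is the start configuration and every $2\times 3$ window is legal, then each row is a legal successor of the row above it. I would prove this by induction over rows, using the fact that a cell changes between consecutive rows only if it is adjacent to the head. With this lemma, $w\in L$ holds iff $\phi$ is satisfiable. I expect this step to be the main obstacle; the rest is routine bookkeeping.

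\emph{Reduction from CNF-SAT to 3-SAT.} Each clause $(\ell_1\vee\cdots\vee\ell_r)$ with $r>3$ is replaced by the chain
\[
(\ell_1\vee\ell_2\vee z_1)\wedge(\overline{z_1}\vee\ell_3\vee z_2)\wedge\cdots\wedge(\overline{z_{r-3}}\vee\ell_{r-1}\vee\ell_r),
\]
where $z_1,\dots,z_{r-3}$ are fresh variables. Correctness in both directions:
\begin{itemize}
\item If some $\ell_t$ is true, set $z_i$ true for all links before $\ell_t$ and false for all links after it; every clause of the chain is then satisfied.
\item If all $\ell_t$ are false, the $z_i$ are forced true one after another along the chain, and the last clause fails.
\end{itemize}
Clauses with fewer than three literals are padded by repeating a literal, which preserves satisfiability. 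The blow-up is linear in the size of the formula. Composing this with the previous step shows that every $L\in$ NP reduces to 3-SAT in polynomial time. Together with membership in NP, this proves Theorem~\ref{thm_sat}.
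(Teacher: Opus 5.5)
Your proposal is correct. The paper gives no proof of this statement; it only cites Sipser's textbook, and your argument is essentially the proof given there: a truth assignment as certificate, the Cook--Levin tableau formula written in CNF with the $2\times 3$ window locality claim as the key step, then splitting long clauses with chained fresh variables and padding short ones by repeating a literal.
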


The section \ref{sec:Rules of Chinese Chess} of this paper primarily introduces the rules of Chinese Chess, section \ref{sec:the reduction} provides a detailed description of the constructed gadgets and the reduction, section \ref{sec:conclusion} introduces the future research work.

\section{Rules of Chinese Chess}
\label{sec:Rules of Chinese Chess}
Chinese Chess has two players, Red and Black. The initial positions of the two players' pieces on the board are shown in Figure \ref{Fig.1}. The Chinese Chess board is comprised of several vertical and horizontal lines. The pieces are to be moved and played on the intersections \cite{World-Xiangqi-Rules2018}. There are two special areas on the board, called \textit{palace}, as the red dashed boxes shown in Figure \ref{Fig.1}. Each palace contains two oblique lines. The palace at the bottom of the board is red's palace, the other is black's palace.

In Chinese Chess game rules, \textit{check} is a move that attacks the opponent's king with the intent of capturing the king on the next move \cite{World-Xiangqi-Rules2018}. A king so threatened is said to \textit{be in check}. For example, if Red moves her rook to attack black king, usually called `red rook check Black', `Red check Black' or `check'. The player being in check must immediately take action to resolve the attack on its king, which is called \textit{resolve check} \cite{World-Xiangqi-Rules2018}. If a player cannot resolve check, meaning its king will inevitably be captured by the opponent on the next move, which is called \textit{checkmate}. The player that first checkmates the opponent's king wins the game. Taking Red as an example, king chasing refers to Red must check Black in every move until it checkmates black king, while Black continuously resolves check. 
\begin{figure}[H]
\begin{minipage}[t]{0.48\textwidth}
\centering
\includegraphics[width=\textwidth]{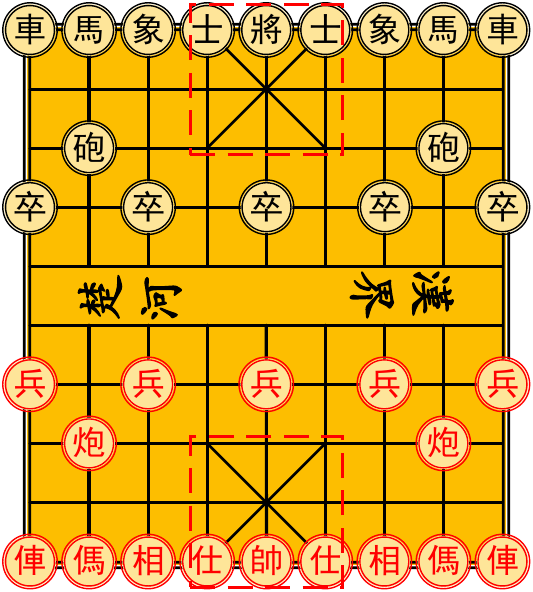}
\caption{The Chinese Chess board.}
\label{Fig.1}
\end{minipage}
\hfill
\begin{minipage}[t]{0.48\textwidth}
\centering
\includegraphics[width=\textwidth]{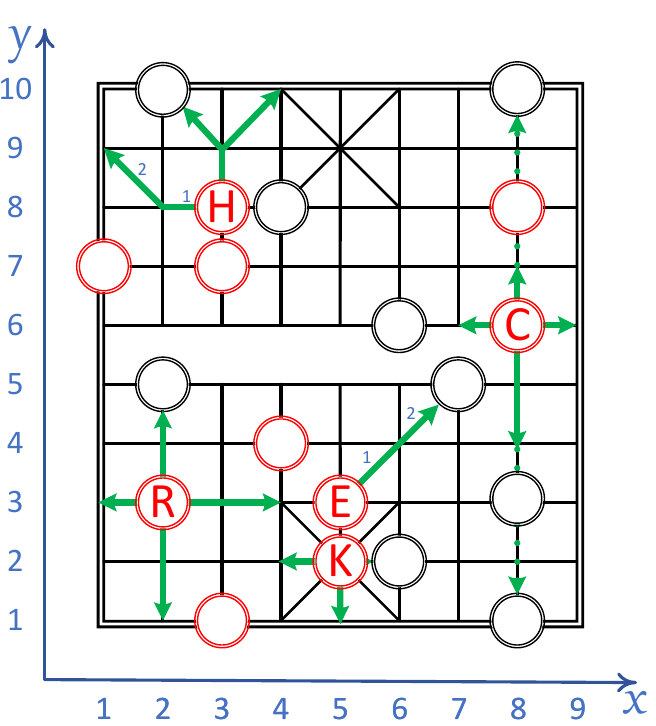}
\caption{The movement rules, intersections indicated by green arrows represent reachable intersections.}
\label{Fig.2}
\end{minipage}
\end{figure}

At the start of the game, Red moves first, and both players alternate making a move until a win, or draw is determined. When a player moves a piece from one intersection to another, it constitutes \textit{a move}. Moving a piece  from one intersection to several adjacent intersections is called \textit{move a square}. A piece moving along vertical line, horizontal line and diagonal line are respectively called \textit{a vertical move}, \textit{a horizontal move}, and \textit{a diagonal move}.

The next few paragraphs describe the move and capture rules for each type of piece in Chinese Chess. In Figure \ref{Fig.2}, intersections indicated by green arrows represent reachable intersections.

King: The king can move one square per move either vertically or horizontally. In Figure \ref{Fig.2}, red king at (5,2) can move to (4,2), (5,1), or (6,2) with a move. It can capture the black piece at (6,2) and occupy that intersection. However, it cannot move to (5,3) because that intersection is occupied by a red piece. Red king is denoted as \raisebox{-0.2ex}{\includegraphics[height=1em]{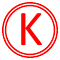}}, and black king as \raisebox{-0.2ex}{\includegraphics[height=1em]{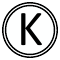}}.

Rook: The rook can move as many squares as the player wants in a vertical line or a horizontal line per move, as long as no piece blocks its path. In Figure \ref{Fig.2}, red rook at (2,3) can move to any intersection along the four green solid lines starting from it, such as (2,4), (1,3), etc., and it can also capture the black piece at (2,5) and occupy that intersection. Red rook is denoted as \raisebox{-0.2ex}{\includegraphics[height=1em]{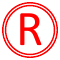}}, and black rook as \raisebox{-0.2ex}{\includegraphics[height=1em]{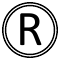}}.

Elephant: The elephant moves two squares diagonally in the same direction per move, and the direction of the two diagonal moves must remain consistent. If the intersection reached after the first diagonal move is occupied by any piece, it cannot move in that direction. In Figure \ref{Fig.2}, red elephant at (5,3) can move to (7,5), with the two diagonal moves being (5,3) to (6,4) and (6,4) to (7,5). However, it cannot move to (3,5) or (7,1) because (4,4) and (6,2) are occupied by pieces. It cannot move to (3,1) because that intersection is occupied by red piece. Red elephant is denoted as \raisebox{-0.2ex}{\includegraphics[height=1em]{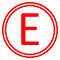}}. In gadgets constructed in this paper, there are numerous black elephants, and to highlight other pieces, black elephant is denoted as \raisebox{-0.2ex}{\includegraphics[height=1em]{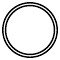}}. As we shall see in the next section, elephants are like unmovable walls in our construction of gadgets.

Horse: The horse moves one square vertically or horizontally followed by one square diagonally per move. If the intersection reached after the vertical or horizontal move is occupied by any piece, it cannot move in that direction. In Figure \ref{Fig.2}, red horse at (3,8) can move to (1,9), through first moving vertically one square from (3,8) to (2,8), second moving diagonally one square from (2,8) to (1,9). However, it cannot move to (5,9), (5,7), (2,6) and (4,6), because (4,8) and (3,7) are occupied by pieces. It cannot move to (1,7), because that intersection is occupied by red piece. Red horse is denoted as \raisebox{-0.2ex}{\includegraphics[height=1em]{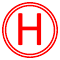}}, and black horse as \raisebox{-0.2ex}{\includegraphics[height=1em]{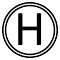}}.

Cannon: The movement of cannon is the same as rook. However, when a cannon captures an opponent's piece, it must leap over one piece. In Figure \ref{Fig.2}, red cannon at (8,6) can move to any intersection along the four green solid lines starting from it, such as (8,7), (7,6), (8,5), etc. Because there is one piece at (8,8), it can leap over the piece and capture the black piece at (8,10), and occupy that intersection. Similarly, it can also capture black piece at (8,1). Red cannon is denoted as \raisebox{-0.2ex}{\includegraphics[height=1em]{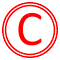}}, and black cannon as \raisebox{-0.2ex}{\includegraphics[height=1em]{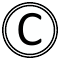}}.

This paper studies the generalized king chasing problem in Chinese Chess, that is, Definition \ref{king chasing problem} is studied in generalized Chinese Chess. In generalized Chinese Chess game position, there are one black king, one red king, and multiple other types of pieces on the board. The range of palace is generalized and expanded, meaning that generalized palace contains more intersections, naturally expanding the movement range of red king and black king.

Chinese Chess game contains seven types of pieces. But, in the reduction proof of this paper, only five types of pieces introduced above are used, and all pieces move within generalized palace. We will prove that, even with these restrictions, the king chasing problem in Chinese Chess is NP-hard.

\section{The reduction}
\label{sec:the reduction}
This paper proposes a protocol that constructs a game position for each formula in 3-SAT, such that the formula can be made `true' if and only if Red has a winning strategy. A game position is constructed by assembling gadgets that simulate the start, variable, and clause of 3-SAT. Throughout the game play, Red must use certain pieces to continuously check Black, while black king is forced to flee along a prearranged route to repeatedly resolve check. Whether Red can ultimately checkmate black king depends on whether the corresponding formula of 3-SAT can be made `true'.

In this paper, all of the constructed gadget locates on the generalized palace. In order to highlight pieces, diagonal lines on the board are not drawn.

\begin{figure}[H]
\begin{minipage}[t]{0.32\textwidth}
\centering
\includegraphics[width=\textwidth]{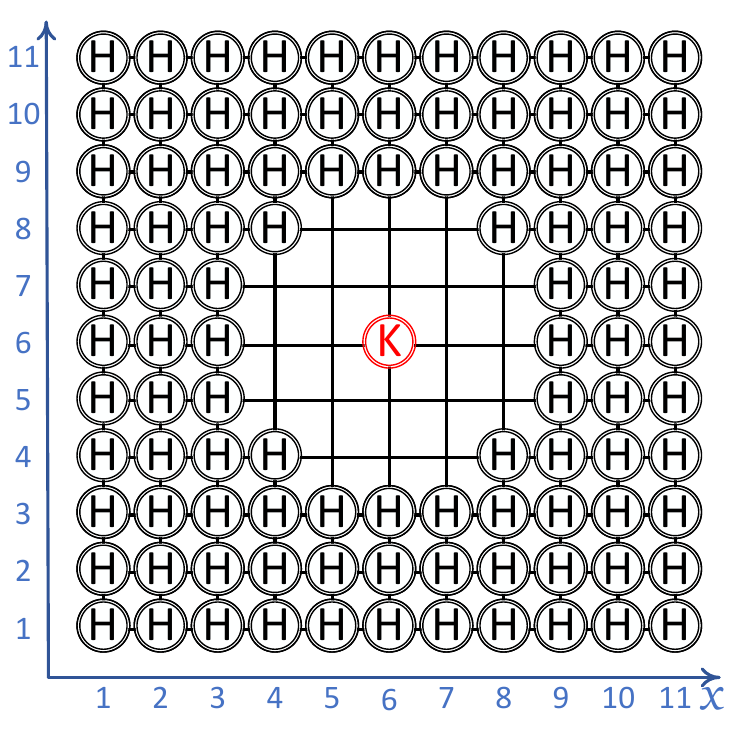}
\caption{The mate-in-one gadget.}
\label{Fig.3}
\end{minipage}
\hfill
\begin{minipage}[t]{0.32\textwidth}
\centering
\includegraphics[width=\textwidth]{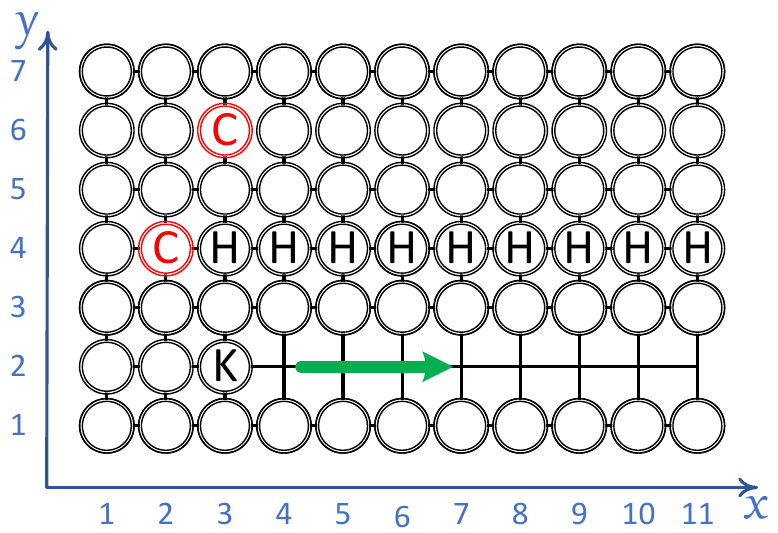}
\caption{The start gadget.}
\label{Fig.4}
\end{minipage}
\hfill
\begin{minipage}[t]{0.32\textwidth}
\centering
\includegraphics[width=\textwidth]{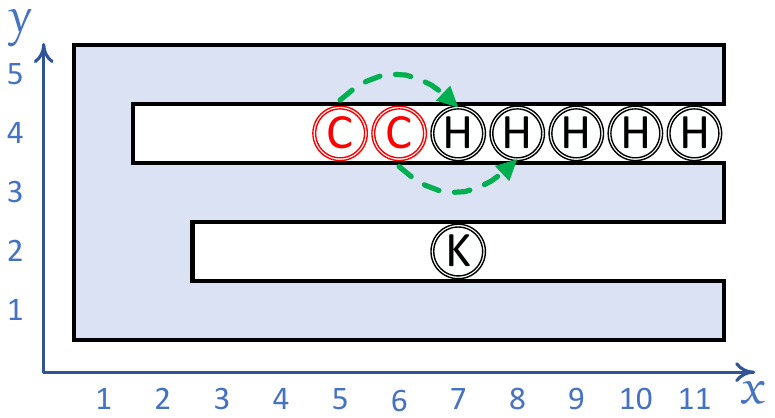}
\caption{Directional cannons alternately check Black.}
\label{Fig.5}
\end{minipage}
\end{figure}

\subsection{The mate-in-one gadget}
\label{subsec:the mate-in-one gadget}
Throughout the game play, Red needs to be in a state of being checkmated on the next move, thus we construct the mate-in-one gadget. In this gadget, red king is in a state which it will be checkmated by black horse on the next move. So Red has to check Black in every move, until Black is checkmated. As shown in Figure \ref{Fig.3}, a large number of black horses surround red king, these pieces are isolated in a specific area of the board, unaffected by the actions of other red pieces. If Red makes a move that does not check Black, Black will not need to resolve check. This means that black horse in this gadget can move and immediately checkmate red king, resulting in Red lose.

As shown in Figure \ref{Fig.3}, if black horse at (5,3) has the opportunity to move to (4,5), all possible positions that red king could move to would fail to resolve check. In other words, black horse checkmates red king. Therefore, to avoid being checkmated, Red must check Black in every move, forcing Black to resolve check in every move, this prevents Black from having the opportunity to move the horse in this gadget.

\subsection{ The start gadget}
\label{subsec:the start gadget}
The main idea of constructing start gadget is that Red continuously checks Black in every move, forcing black king to move unidirectionally along a specific direction. Here, \textit{unidirectional} means black king cannot move back to a position it has already passed through. There are two red cannons in the gadget, called \textit{directional cannons}. They will alternately capture black horses to check Black. During the whole game play, different pieces will take the responsibility of being the directional cannons.

As shown in Figure \ref{Fig.4}, at the beginning, red cannon at (3,6) captures black horse at (3,4) to check Black, forcing black king to move right to (4,2) to resolve check. Afterwards, red cannon at (2,4) moves to (4,4). At this point, these two red cannons form directional cannons. Thereafter, black king can only move right to resolve check. It cannot move left back to (3,2), as red cannon at (3,4) is attacking this position. In the gadget, Black cannot move his elephants or horses because black king being in check.

The two directional cannons alternately capture black horses on the same horizontal line to check Black, forcing black king to move unidirectionally. As shown in Figure \ref{Fig.5}, one directional cannon at (5,4) captures black horse at (7,4) to check black king at (7,2), forcing black king to move to (8,2). Another directional cannon at (6,4) captures black horse at (8,4) to continue check black king at (8,2).

\subsection{The turn gadget}

The purpose of constructing the turn gadget is to force the black king to make a $90^{\circ}$ degree turn along its way. As shown in Figure \ref{Fig.6}, the gadget contains an entrance and an exit.

When black king reaches the entrance of this gadget, two directional cannons alternately capture black horses to force black king to move left along the channel until it reaches (6,6). Red cannon at (4,8) moves to (6,8) to check, black cannon at (6,10) moves to (6,8) to resolve check, red cannon at (4,10) moves to (4,6) to continue checking Black, forcing black king to move downward to (6,5). Then, red cannon at (2,5) moves to (4,5). Red cannons at (4,6) and (4,5) become new directional cannons. Directional cannons alternately capture black horses, forcing black king to move downward (in the direction of the vertical green arrow in the figure). This changes the movement direction of black king.

There are also has other move sequences. When black king reaches (6,6), red cannon at (8,8) could move to (6,8) to check, and then black cannon at (6,10) could move to (6,8). In order to keep Black being in check, red cannon at (4,8) should move to (6,8), and black cannon at (6,11) moves to (6,8) to resolve check. The subsequent moves follow the same pattern as described above.

As shown in Figure \ref{Fig.6}. A black horse is placed at (8,7), instead of a black elephant. This is because, the elephant would be movable—for example, it could move to (6,5). More importantly, due to the mobility of the elephant, it would cause the reduction to fail. Suppose there is a black elephant at (8,7), when black king moves to (8,6), directional cannon at (10,8) moves to (8,8) to check Black, and elephant at (8,7) moves to (6,5) to resolve check (since a cannon must leap over one piece to capture). Red can then only move directional cannon at (8,8) to capture black elephant at (8,10) and check Black. The black king then continues to move left to (7,6), at which point Red cannot check black. After Red makes an arbitrary move, Black could move the horse in the mate-in-one-gadget, and immediately checkmating Red.

To prevent red cannons on the horizontal line ($y$=8) from trying to checkmate the black king, there are three black cannons placed on the vertical line ($x$=6). For example, when red cannon at (8,8) moves to (6,8), black cannon at (6,10) or (6,11) will move downward to capture the red cannon.

\begin{figure}[H]
\begin{minipage}[t]{0.48\textwidth}
\centering
\includegraphics[width=\textwidth]{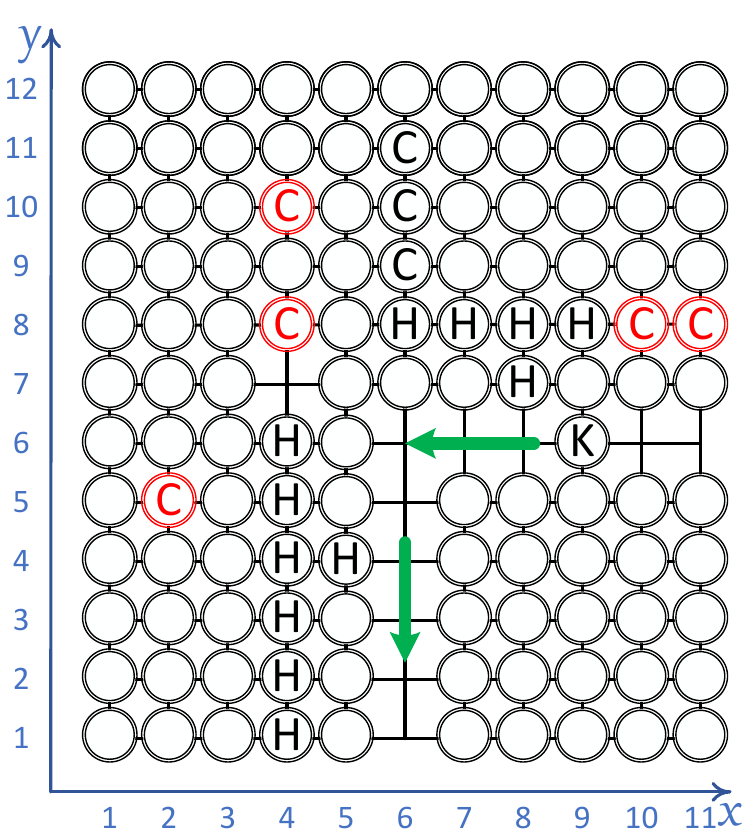}
\caption{ The turn gadget.}
\label{Fig.6}
\end{minipage}
\hfill
\begin{minipage}[t]{0.48\textwidth}
\centering
\includegraphics[width=\textwidth]{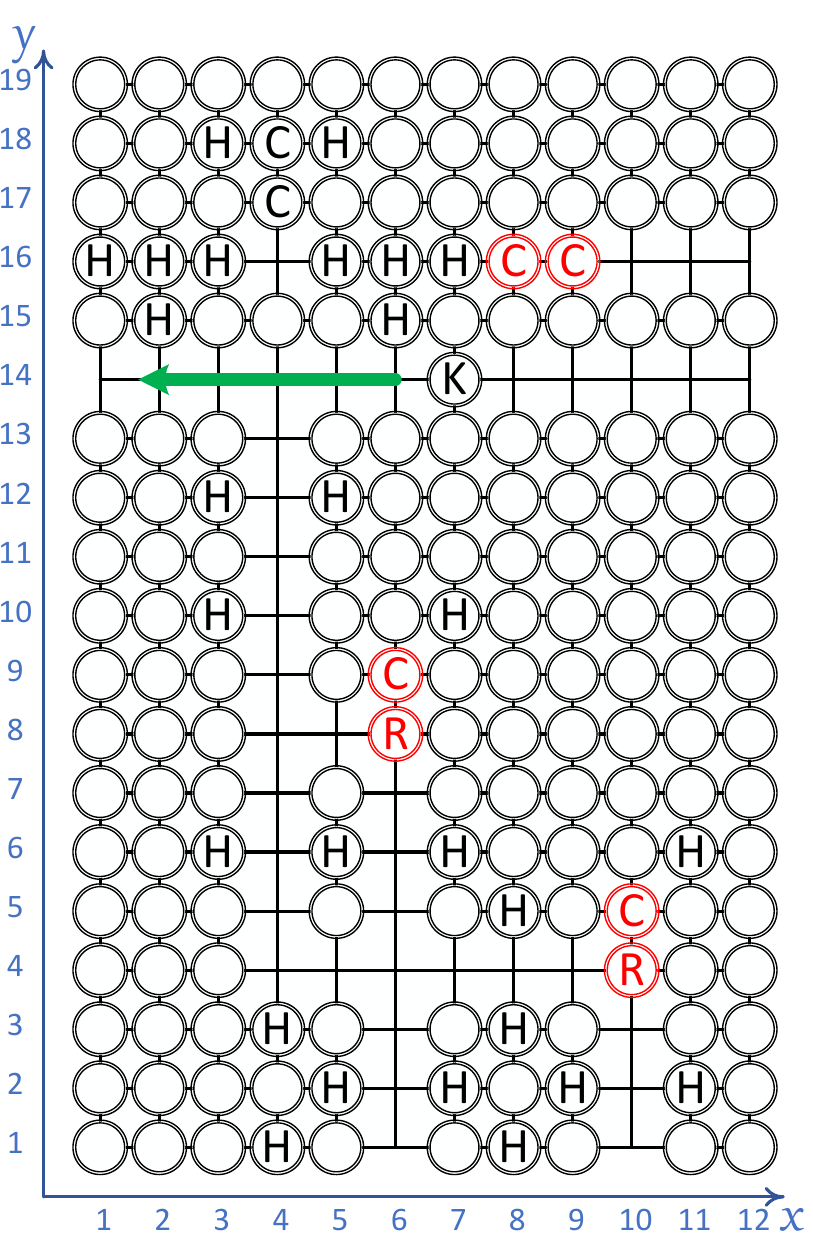}
\caption{The variable gadget.}
\label{Fig.7}
\end{minipage}
\end{figure}

\subsection{The variable gadget}
\label{subsec:the variable gadget}
A variable gadget, illustrated in Figure \ref{Fig.7}, simulates a variable in a formula. Black king enters from the entrance on the right and leaves through the exit on the left. There are two red rooks in the variable gadget. One of the two rooks should check Black, corresponding to the variable $x_i$ or its negation $\lnot x_i$ being chosen, such that once a rook is moved, the other rook can never be moved. The red cannons positioned above red rooks are called \textit{variable cannon}. The cannon on the left is corresponding to variable $x_i$, and another on the right is corresponding to $\lnot x_i$.  The red cannon at (6,9) represents $x_i$, cannon at (10,5) represents $\lnot x_i$. After a rook moves, the corresponding variable cannon can move downward to check Black, collaborating with other pieces complete the verification of the literals in clause gadgets (these will be explained in Subsection \ref{subsec:the logical connection between variable gadget and clause gadget}).

As shown in Figure \ref{Fig.7}, when the black king reaches to (4,14), Red cannot use directional cannons to check Black. This is because, if directional cannon at (5,16) moves to (4,16) to check Black, black cannon at (4,18) can capture that directional cannon to resolve check. In this case, Red would be left with only one directional cannon, unable to continue alternating check, which would cause Red to lose. So Red can only use one of the two rooks to check Black. If red rook at (6,8) moves to (4,8) to check, then it is corresponding to that variable $x_i$ is assigned true. If red rook at (10,4) moves to (4,4) to check, then it is corresponding to that variable $x_i$ is assigned false. Only one of the two red rooks can move to vertical line ($x$=4), simulating the assignment of a variable. Thereafter, black king continues moving left, and two directional cannons continue to alternately check black king.

When black king moves to (3,14), if Red uses the rook that moved to the vertical line ($x$=4) in the previous step to move upward to (4,14) in an attempt to checkmate black king, black cannon at (4,17) will immediately capture that red rook to resolve check. Then Red has to continue using directional cannons to check Black. In this case, black cannon will remain at (4,14), but this has no impact in the game.

\subsection{The clause gadget}
\label{subse:the clause gadget}
A clause gadget simulates a clause in a formula, illustrated in Figure \ref{Fig.8}. Black king enters from the entrance at the bottom and leaves through the exit at the top. There are three red cannons that aligned vertically with red rook and black rook, called \textit{literal cannon}. Each literal cannon corresponds to a literal in a clause, and three literal cannons simulate the disjunction of three literals.

\begin{figure}[H]
\centering
\begin{minipage}[t]{0.48\textwidth}
\centering
\includegraphics[width=\textwidth]{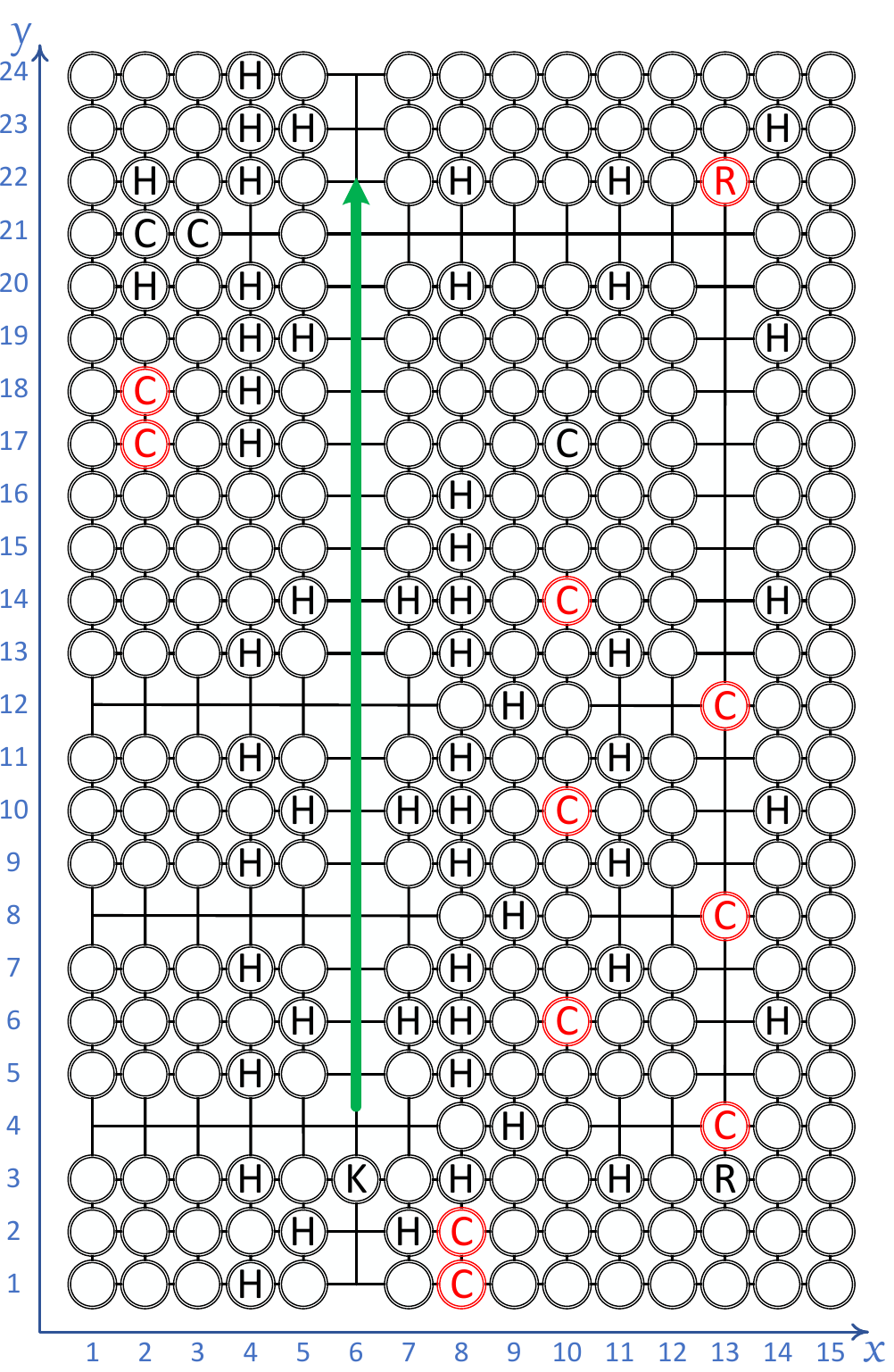}
\caption{ The clause gadget.}
\label{Fig.8}
\end{minipage}
\end{figure}

When black king reaches to the horizontal line occupied by a literal cannon, Red has only two methods to check Black. One is to use a literal cannon, this subsection will introduce in detail; and another is to use red rook on the left (not shown in Figure \ref{Fig.8}), this method will be explained in Subsection \ref{subsec:the logical connection between variable gadget and clause gadget} later.

Variable $x_i$ is assigned either true ($x_i$) or false ($\lnot x_i$), if variable assignment makes the corresponding literal true, the assignment is said to satisfy the literal. For example, if $x_i$ = \textbf{true}, the assignment of variable $x_i$ satisfies literal $x_i$ but does not satisfy literal $\lnot x_i$.

\begin{lemma}\label{lem_literal}
 If variable assignment satisfies literal, then in the constructed game position, the literal cannon corresponding to that literal will not be used to check Black; otherwise, it will move to check.
\end{lemma}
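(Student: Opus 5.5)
We argue by case analysis on the red rook the variable gadget for $x_i$ used, together with the fact that a literal cannon of a clause gadget lies on the same vertical line as a red rook and a black rook. The idea is that the variable gadget's choice decides which of the two rooks (and hence which variable cannon) has left its original column. When the black king reaches the row of a literal cannon, Red's only two ways to check are the ones listed above: the literal cannon, or the red rook on the left. The lemma then says that exactly one of these is available, depending on whether the assignment satisfies the literal.

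\emph{Step 1: fix the geometry.} Fix a literal $\ell\in\{x_i,\lnot x_i\}$ in some clause. We trace the line through its literal cannon back to the variable gadget of $x_i$. By the construction of Subsection \ref{subsec:the variable gadget}, the rook that checks Black there is the one corresponding to the satisfied literal. That rook then sits on column $x=4$ of the variable gadget, and the matching variable cannon gains a downward path.

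\emph{Step 2: the satisfied case.} Suppose the assignment satisfies $\ell$. Then the freed variable cannon, or the rook on the left, lies on a line that checks the black king when it reaches the literal cannon's row. We argue that this check is available and is forced, so Red uses it. A literal-cannon check at that moment would be either illegal (no screen piece) or immediately answered by capture of the cannon. In the latter case Red loses a needed piece, exactly as in the variable-gadget argument with the black cannon at (4,18). So the literal cannon is not used.

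\emph{Step 3: the unsatisfied case.} Suppose the assignment does not satisfy $\ell$. The corresponding rook never moved, and the variable gadget guarantees it can never move afterwards. Hence the rook route is blocked: the piece lacks a clear line, or the black rook on the same vertical line captures it. The directional cannons also cannot check at this row, by the same obstruction as in the variable gadget. The mate-in-one gadget (Subsection \ref{subsec:the mate-in-one gadget}) forces Red to give a check every move. The only remaining check is the literal cannon moving to check, so it must move.

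\emph{Main obstacle.} The hard part is exhaustiveness: showing there is no other red check at that row, such as a directional cannon, a clause-gadget cannon, or a horse. It also requires showing that each Black reply is forced, so Black cannot leave through a different square that changes the situation. I would go through each red piece within line-of-sight of the king's square in Figure \ref{Fig.8} and check that each is blocked or immediately captured. I would also confirm that moving an unsatisfied literal cannon removes it from its column, leaving the clause's rook–cannon alignment altered. This is what the later clause-checking argument needs.
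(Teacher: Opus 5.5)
Your proposal has a genuine gap: it never uses the mechanism that links a variable gadget to a clause gadget, so neither half of the argument works as written. In the paper that link is the \emph{key rook}. For each literal $\ell$ of $x_i$, a black rook is placed where the literal cannon's horizontal line meets the vertical line of variable cannon-$\ell$. It sits on the line along which the red rook just to its lower left would check (the key rook at (7,12) and the red rook at (5,11) in Figure \ref{Fig.11}). The black king crosses that row twice:
\begin{itemize}
\item On the downward run (king at (2,12)), a freed variable cannon captures the key rook with check, and the force-to-bottom gadget then drives the cannon down out of the way.
\item On the upward run through the clause gadget (king at (34,12)), the released rook steps onto the row and checks, so the literal cannon is not needed.
\end{itemize}
If $\ell$ is unsatisfied, the variable cannon stays stuck above the unmoved variable-gadget rook and the key rook survives. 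The left rook's line then stays blocked, and the directional cannons are unusable on that row. So the literal cannon is the only check, and the mate-in-one gadget forces Red to make it.

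Your Step 2 merges these two moments into one. Your Step 3 blames the wrong pieces. The unmoved rook in the variable gadget is not the clause's left rook. The black rook on the literal cannon's column is the clause-verification rook; it interacts only with the red rook at the top of that column, capturing it only once all three literal cannons have left. It has nothing to do with the left rook.

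Separately, your Step 2 claims that a literal-cannon check is ``either illegal (no screen piece) or immediately answered by capture'' when $\ell$ is satisfied. That is false in this construction. The literal cannon's check (for example, capturing the horse at (9,4) in Figure \ref{Fig.8}) depends only on the local clause-gadget geometry, not on the assignment. The force-to-bottom subsection even shows Red using the literal cannon for the satisfied literal $\lnot x_2$ (Figure \ref{Fig.13}).

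So the first half of the lemma cannot be proved as a compulsion. It holds only in the sense that Red has an alternative rook check and need not use the literal cannon. That is all Lemma \ref{lemma-3} uses, while Lemma \ref{lemma-4} needs only the ``otherwise'' half. For the satisfied case, you must show that this alternative check exists, which means the key-rook capture on the downward pass. You do not need, and cannot get, a proof that the literal cannon is unusable.
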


Lemma \ref{lem_literal} will become evident after the complete interconnection mechanism between the variable gadgets and the literal gadgets have been introduced in Subsection \ref{subsec:the logical connection between variable gadget and clause gadget} and Subsection \ref{subsec:the force-to-bottom gadget}. Provided Lemma \ref{lem_literal} holds, we now prove the following lemma on the property of clause gadget.

\begin{lemma}
  If variable assignments make the clause evaluate to true, then in the constructed game position,  black king can pass through clause gadget corresponding to that clause.
\end{lemma}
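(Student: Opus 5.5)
The plan is to argue directly from the structure of the clause gadget in Figure \ref{Fig.8}, taking Lemma \ref{lem_literal} as given. When the black king travels upward through the gadget, it must cross the three horizontal lines that contain the three literal cannons. At each such line, the directional cannons cannot deliver the next check; this is the same mechanism as in the variable gadget. So Red has only two ways to keep checking: use that line's literal cannon, or use the red rook on the left. The goal is to show that the king can be pushed up to the exit whenever the clause is true. Equivalently, Red is never left without a check at one of these critical lines, so the mate-in-one gadget never fires.

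First I would fix the clause $C=\ell_1\lor\ell_2\lor\ell_3$ and an assignment that makes $C$ true. By Lemma \ref{lem_literal}, the cannons of the unsatisfied literals move to check Black, while the cannon of a satisfied literal stays unused. The second step is to trace the king's route line by line.
\begin{itemize}
\item On the line of an \emph{unsatisfied} literal, the literal cannon moves to check. Black's only resolutions are interposing the black rook (which lies vertically aligned with the literal cannon and red rook) or stepping the king forward. I would check, as was done for the turn and variable gadgets, that after Black's reply the directional cannons can resume alternating checks. The king then continues upward.
\item On the line of a \emph{satisfied} literal, the check is delivered instead via the other mechanism, the red rook on the left. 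This mechanism is set up by the variable gadget and the connection described in Subsection \ref{subsec:the logical connection between variable gadget and clause gadget}.
\end{itemize}

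The key point is that at least one literal of $C$ is satisfied. Hence not all three lines rely on literal cannons. I would argue that the gadget is designed so that failure occurs only when all three literal cannons have been used, for example because the single black rook can absorb only a bounded number of such checks before the king is trapped or Red runs out of checks. With at least one satisfied literal, Red always has a legal checking move at each critical line. The king is therefore forced out through the top exit, possibly with harmless leftover black pieces, exactly as in the variable gadget.

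The main obstacle is the interaction at the lines of satisfied literals. We must show that the rook-based check there is really available, and that Black has no alternative resolution (capturing a directional cannon, or moving a black rook or cannon) that stalls Red. This depends on the connection mechanism, which the paper introduces only later. So I would state this step conditionally on that mechanism, in the same way the lemma is conditioned on Lemma \ref{lem_literal}. I would then verify it by enumerating Black's legal responses to each check in the figure's coordinates, as was done for the turn gadget.
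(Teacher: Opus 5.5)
There is a genuine gap: your argument never locates where the truth of the clause actually matters.

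At every literal line Red \emph{always} has a check available, namely the literal cannon itself. For example, the cannon at $(13,4)$ captures the horse at $(9,4)$ to check the king at $(6,4)$. So your conclusion that ``Red is never left without a check at one of these critical lines'' holds for every assignment, and the lemma cannot rest on it.

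The decisive step comes after the three literal lines. The king is driven up to $(6,21)$, on the line holding the black double cannons. A directional cannon moving to $(4,21)$ would simply be captured by the black cannon at $(2,21)$. So Red's only check is the red rook dropping from $(13,22)$ to $(13,21)$. The black rook lies on the same vertical line $x=13$ as the three literal cannons and this red rook. Hence it can capture the checking rook exactly when all three literal cannons have left $x=13$. After that capture Red has no check, and the mate-in-one gadget wins for Black. If at least one literal cannon is still on $x=13$, it screens the black rook. Black's only resolution is then to step the king up to $(6,22)$ and out of the gadget; a later lunge of the rook to $(6,21)$ is answered by the black cannon on $y=21$.

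You noticed that the black rook is vertically aligned with the literal cannons and the red rook, but you drew the wrong conclusion from it. The literal cannons check along horizontal lines, so the black rook cannot interpose against them. Its only role is to answer the final rook check, and it does not ``absorb a bounded number of checks''.

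The chain you need is:
\begin{enumerate}
\item A satisfied literal lets Red check on that line with the released left rook (Lemma~\ref{lem_literal} and the connection mechanism), so that literal cannon stays on $x=13$.
\item A surviving cannon on $x=13$ blocks the black rook's capture at $y=21$.
\item Therefore the king is forced through the exit.
\end{enumerate}

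Accordingly, the real obstacle is not the availability of checks at the literal lines, which is supplied by Lemma~\ref{lem_literal}. It is this exit-line analysis. That analysis includes showing that the black cannon at $(10,17)$ and the new directional cannons at $(2,17)$ and $(2,18)$ really drive the king to $(6,21)$, with the rook check as Red's only option there.
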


\begin{proof} We prove the lemma with clause $x_1 \vee \lnot x_2 \vee x_3$ as an example, and the argument applies to any clause in general. Obviously, this clause is satisfied by the variable assignments: $x_1$=\textbf{true}, $x_2$=\textbf{false}, $x_3$=\textbf{false}. As shown in Figure \ref{Fig.8}, red cannons at (13,12), (13,8) and (13,4) are the three literal cannons, corresponding to literals $x_1$, $\lnot x_2$ and $x_3$ respectively.

When black king is driven to (6,4), Red cannot use two directional cannons to check Black and must use either literal cannon at (13,4) or red rook on the left. Since the assignment of the variable $x_3$ is false, it does not satisfy literal $x_3$, so Red only uses literal cannon at (13,4) to capture black horse at (9,4) to check. When black king is driven to (6,8), the assignment of variable $x_2$ is false, which satisfies literal $\lnot x_2$, then Red uses the rook on the left of vertical line ($y$=8) to check (detailed in Subsection \ref{subsec:the logical connection between variable gadget and clause gadget}). After black king sequentially passes through the three literal cannons, their positions are shown in Figure \ref{Fig.9}.

As shown in Figure \ref{Fig.8}, when black king is driven to the horizontal line ($y$=21) occupied by black double cannons, Red can only use the rook above to move downward to check Black. At this time, the positions of three literal cannons can be in one of two scenarios. In the first scenario, all three literal cannons have moved away, as shown in Figure \ref{Fig.10}, and Black uses his rook to capture red rook to resolve check. In this case, Red cannot check Black anymore, leading to Black checkmate red king on the next move. In the second scenario, at least one literal cannon has not moved, and Black cannot use his rook to capture red rook to resolve check, forcing it to continue moving upward to resolve check, as shown in the Figure \ref{Fig.9}. In this case, black king successfully passes through the clause gadget. This simulates the condition in a formula where, as long as one of the three literals in a clause evaluates to true, the entire clause evaluates to true.

\begin{figure}[H]
\begin{minipage}[t]{0.523\textwidth}
\centering
\includegraphics[width=\textwidth]{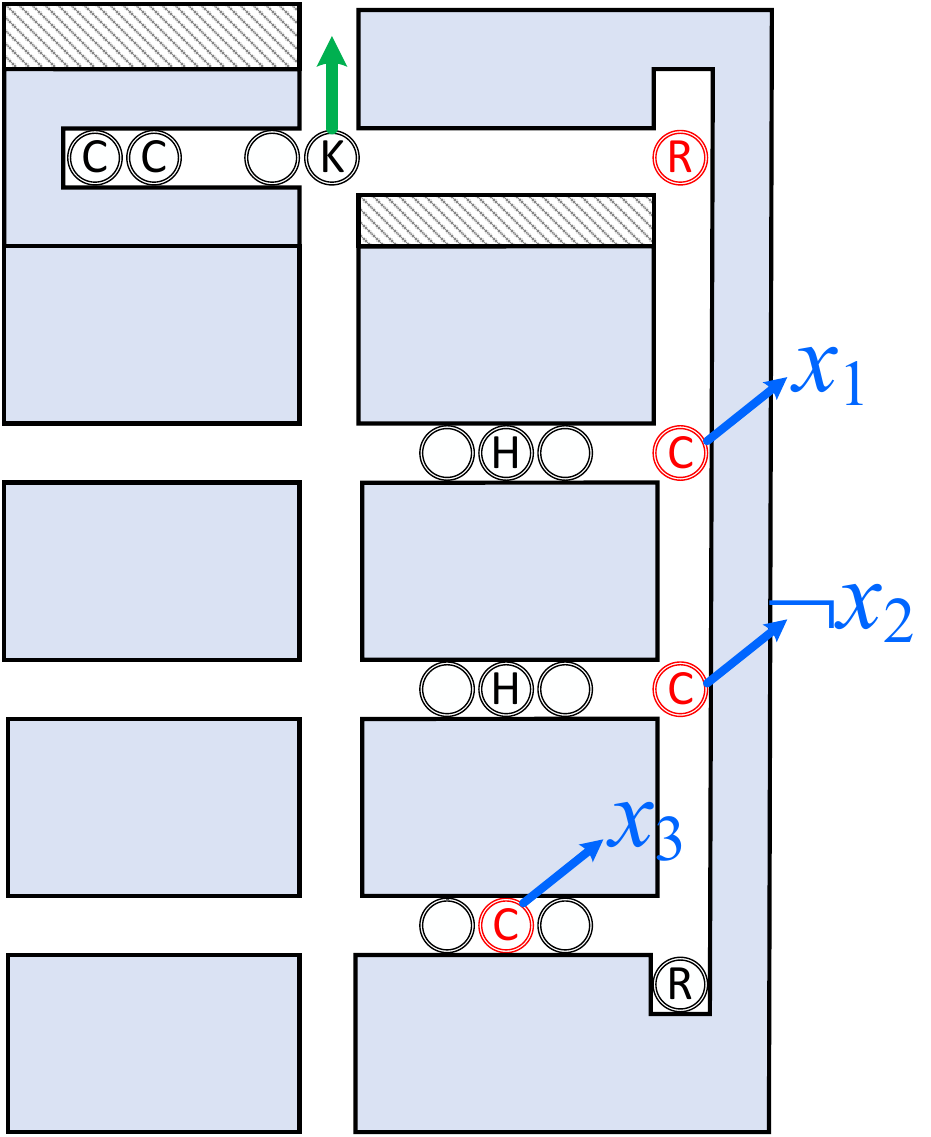}
\caption{Variable assignments make clause evaluate to true.}
\label{Fig.9}
\end{minipage}
\hfill
\begin{minipage}[t]{0.437\textwidth}
\centering
\includegraphics[width=\textwidth]{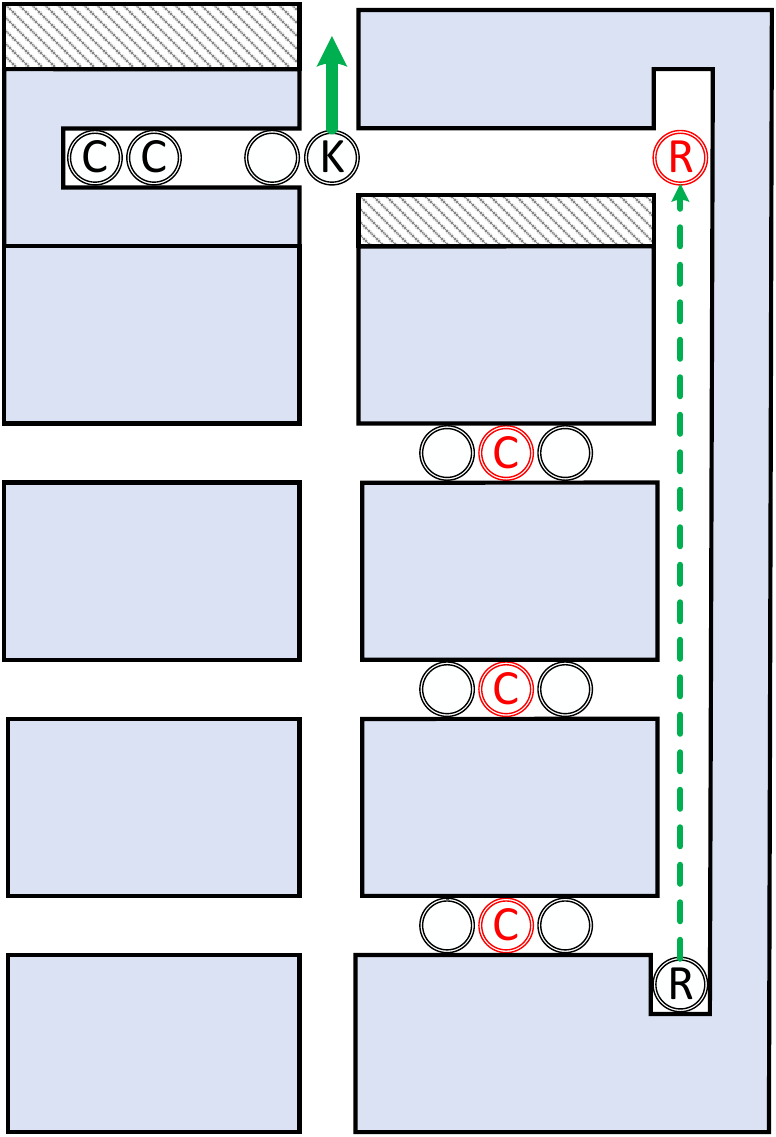}
\caption{Variable assignments make clause evaluate to false.}
\label{Fig.10}
\end{minipage}
\end{figure}

As shown in Figure \ref{Fig.8}, after black king passes through the three literal cannons, it reaches to (6,17). Due to black cannon at (10,17), Red cannot use directional cannons at (8,15) and (8,16) to check Black. The cannons at (2,17) and (2,18) become new directional cannons, which will force black king to move upward to (6,21). At this point, if Red moves directional cannon at (4,20) to (4,21) to check Black, black cannon at (2,21) can immediately capture that directional cannon to resolve check. Red has only one directional cannon, unable to continue alternating check Black, resulting in Red lose. Therefore, Red must use rook at (13,22) to move downward to (13,21) to check.

When black king moves to (6,22), if red rook at (13,21) moves to (6,21) in an attempt to checkmate black king, Black can use the cannon on the horizontal line ($y$=21) to capture that red rook to resolve check, forcing Red to continue using directional cannons to check. In this case, the black cannon will remain at (6,21), but this has no impact on the reduction.\end{proof}

\subsection{ The logical connection between variable gadget and clause gadget}
\label{subsec:the logical connection between variable gadget and clause gadget}
In the constructed game position, the most critical part is to establish a logical connection between the variable gadgets that simulate variables and the clause gadgets that simulate clauses. That is, the variable cannons cooperating with clause gadget to achieve clause verification. In this subsection, we take formula $\Phi$ $= (x_1 \vee \lnot x_2 \vee x_3)$ as an example to show the interconnection (see Figure \ref{Fig.11}) between the variable gadgets and the clause gadgets. The formula $\Phi$ is true by setting the variables $x_1$=\textbf{true}, $x_2$=\textbf{false} and $x_3$=\textbf{false}.

The connections are built by placing black rooks at certain locations in the following way. If a clause includes literal $x_i$ in the formula, then a black rook will be placed at the intersection of the horizontal line occupied by its corresponding literal cannon and the vertical line occupied by variable cannon-$x_i$. If a clause includes literal $\lnot x_i$, then a black rook will be placed at the intersection of the horizontal line occupied by its corresponding literal cannon and the vertical line occupied by variable cannon-$\lnot x_i$. These black rooks that are placed at intersection locations are called \textit{key rook}. The key rooks and literal cannons come in pairs, so there are the same number of key rooks and literal cannons. For example, a key rook at (7,12) is shown in Figure \ref{Fig.11}.

After the black king passing through all variable gadgets and completing variable assignments, it takes a $90^{\circ}$ degree turn and moves downward. When black king moves to the same horizontal line as a literal cannon, if the variable assignment satisfies that literal, the variable cannon moves downward to capture key rook to check, thereby releasing the red rook located to the lower left of the key rook. When black king moves upward later through the clause gadget and is again on the same horizontal line as that literal cannon, the released red rook can move upward to check Black, eliminating the need to move the literal cannon. A movable variable cannon will capture all key rooks on the same vertical line by itself.

The variable gadgets and clause gadgets interact in two situations as shown in Figure \ref{Fig.11}. In the first situation, variable assignment satisfies literal in the clause. For example, variable $x_1$=\textbf{true}, it satisfies the literal $x_1$ in the clause. When black king moves to (2,12), variable cannon-$x_1$ at (7,21) moves downward to capture the key rook at (7,12) to check Black. Then the variable cannon will be forced to move down due to the force-to-bottom gadget, which will be introduced in Subsection \ref{subsec:the force-to-bottom gadget}. When black king moves to (34,12), red rook at (5,11) could move upward to (5,12) to check Black, without needing to move the literal cannon at (40,12). In the second situation, variable assignment does not satisfy literal in clause. For example, variable $x_3$=\textbf{false}, it does not satisfy literal $\lnot x_3$. When black king moves to (2,4), Red can only use directional cannon to check Black, and red rook at (5,3) is not released. When black king moves to (34,4), Red has to move literal cannon at (40,4) to the left to check Black.

\begin{figure}[H]
\begin{minipage}[t]{\textwidth}
\centering
\includegraphics[width=\textwidth]{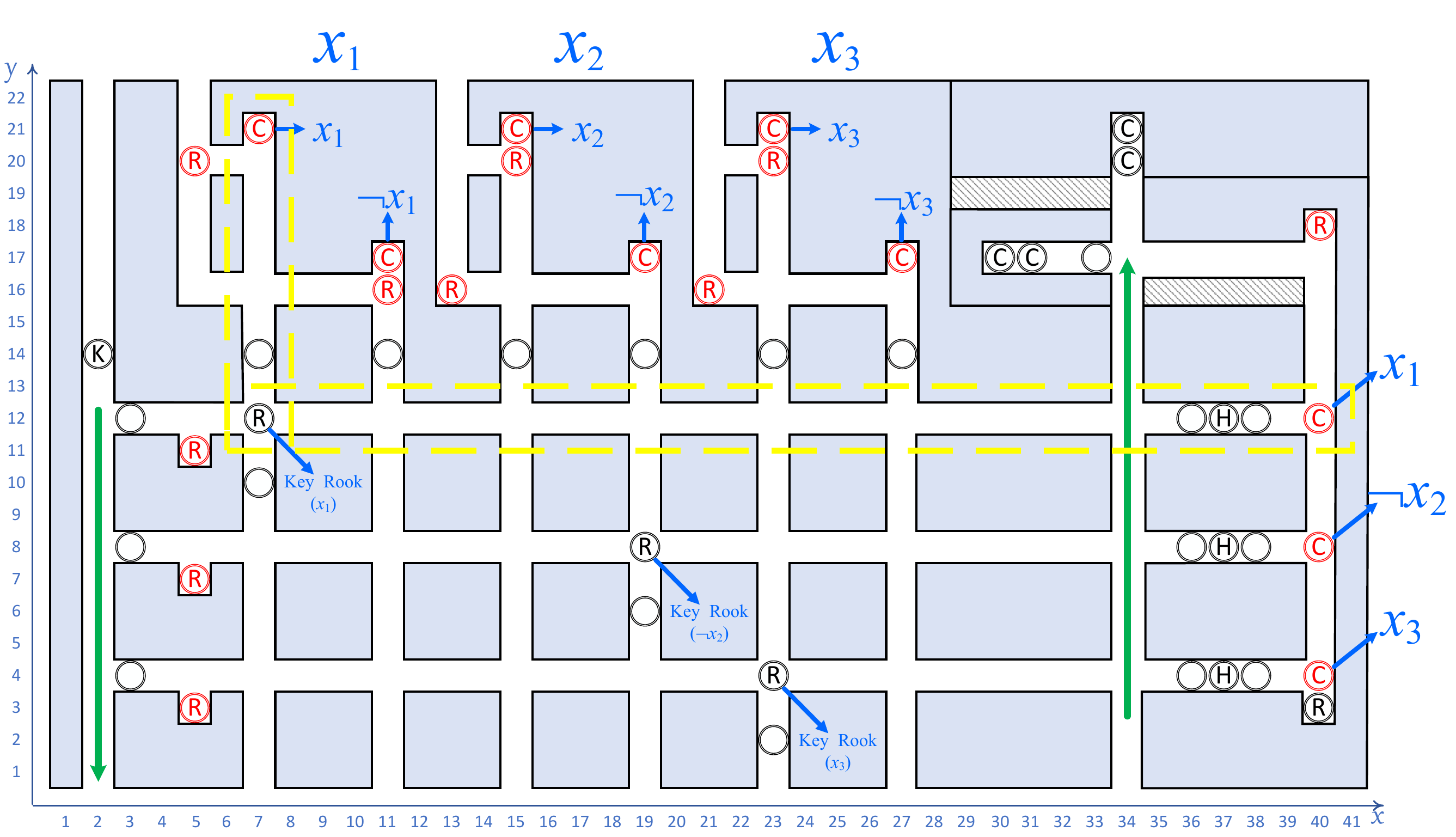}
\caption{Variable cannons cooperate with the clause gadget to achieve clause verification.}
\label{Fig.11}
\end{minipage}
\end{figure}

It should be noted, during the upward movement of black king, the red rooks on the left and right of the channel ($x$=34) in an attempt to move into the channel to checkmate Black, there are two corresponding methods of resolving check. If the right red rook in the clause gadget moves into the channel, the black cannon on the horizontal line ($y$=17) will capture the red rook to resolve check, this situation has been described in detail in Subsection \ref{subse:the clause gadget}. If the left red rook moves into the channel, the black cannon at the top of the channel will capture the rook. For instance, when black king moves to (34,13), if red rook at (5,12) moves right to (34,12) in an attempt to checkmate black king, black cannon at (34,20) will capture that red rook. If multiple red rooks move right into the upward channel in an attempt to checkmate black king, black cannon at the top of the channel will sequentially capture the rooks to resolve check, as shown in Figure \ref{Fig.12}.

\begin{figure}[H]
\centering
\begin{minipage}[t]{0.9\textwidth}
\centering
\includegraphics[width=\textwidth]{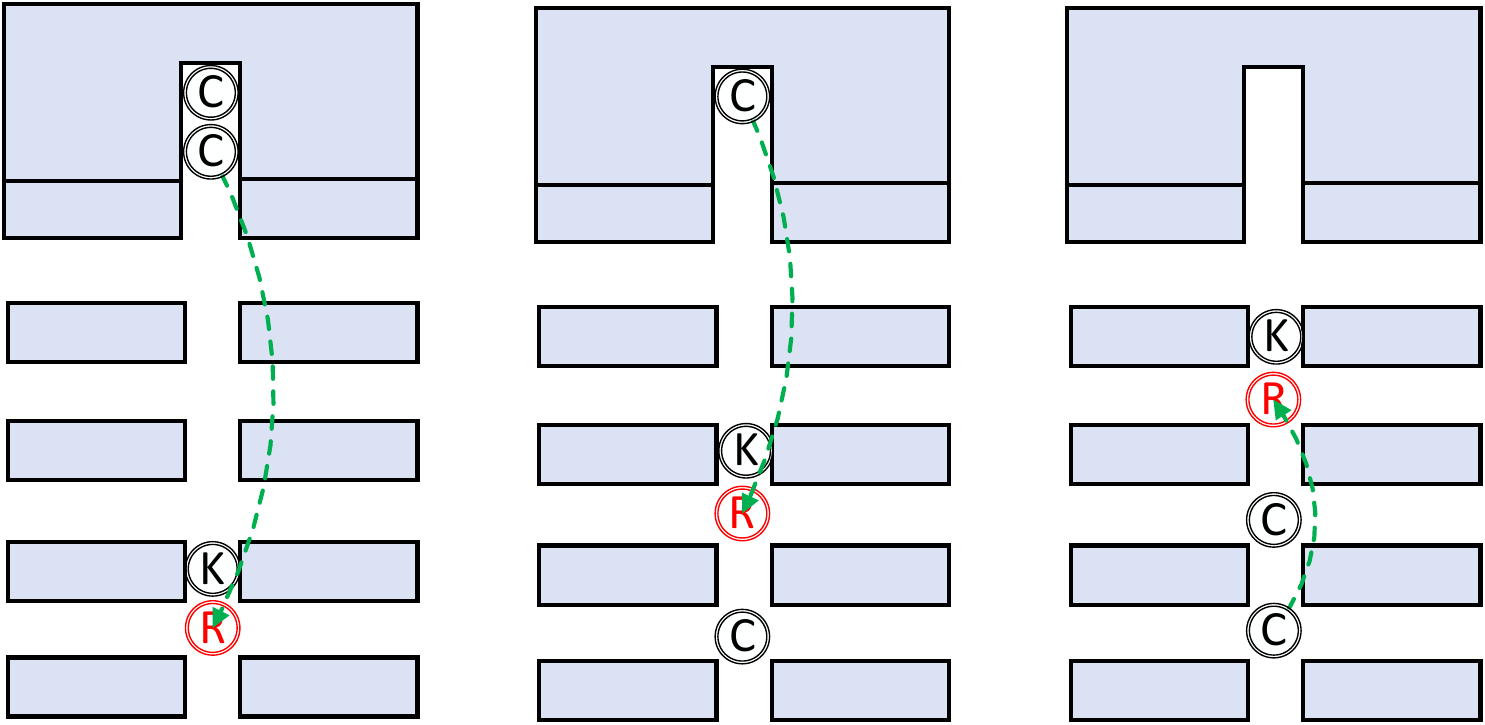}
\caption{ Black cannon at the top of the channel sequentially captures rooks that from the left into the channel to resolve check.}
\label{Fig.12}
\end{minipage}
\end{figure}

\subsection{The force-to-bottom gadget}
\label{subsec:the force-to-bottom gadget}
When black king moves upward, if variable cannons are not at the bottom, Red can deliver certain unintended moves, enabling the existence of a winning strategy even when the variable assignments do not satisfy the formula.

The variables, clauses, and variable assignments used in this section are the same as those in Subsection \ref{subsec:the logical connection between variable gadget and clause gadget}. As shown in Figure \ref{Fig.13}, when black king moves right to (19,2), Red unintended avoids using variable cannon-$\lnot x_2$ at (19,11) and instead uses a directional cannon to check Black. When black king moves upward to (34,11), Red unintended uses literal cannon ($\lnot x_2$) to check Black, despite the assignment of variable $x_2$ satisfying literal $\lnot x_2$. As black king continues moving upward to (34,17), key rook at (5,15) moves right into the channel ($x$=34) in an attempt to checkmate black king, black cannon at (34,23) above the channel captures the key rook to resolve check. At this point, variable cannon-$\lnot x_2$ at (19,11) moves right into the channel to checkmate black king. Then  Red wins, although black king had not yet completed the constructed game position. Seriously, in this case, even if variables assignment don’t satisfy the formula, Red may still win. For example,  $\Phi$ $= (x_1 \vee \lnot x_2 \vee x_3)\land(\lnot x_1\vee x_2\vee  x_3)$, variable assignments are the same as above,  in the game position is constructed based on formula $\Phi$ ,Red can deliver the same unintended moves to win.
\begin{figure}[H]
\centering
\begin{minipage}[t]{0.9\textwidth}
\centering
\includegraphics[width=\textwidth]{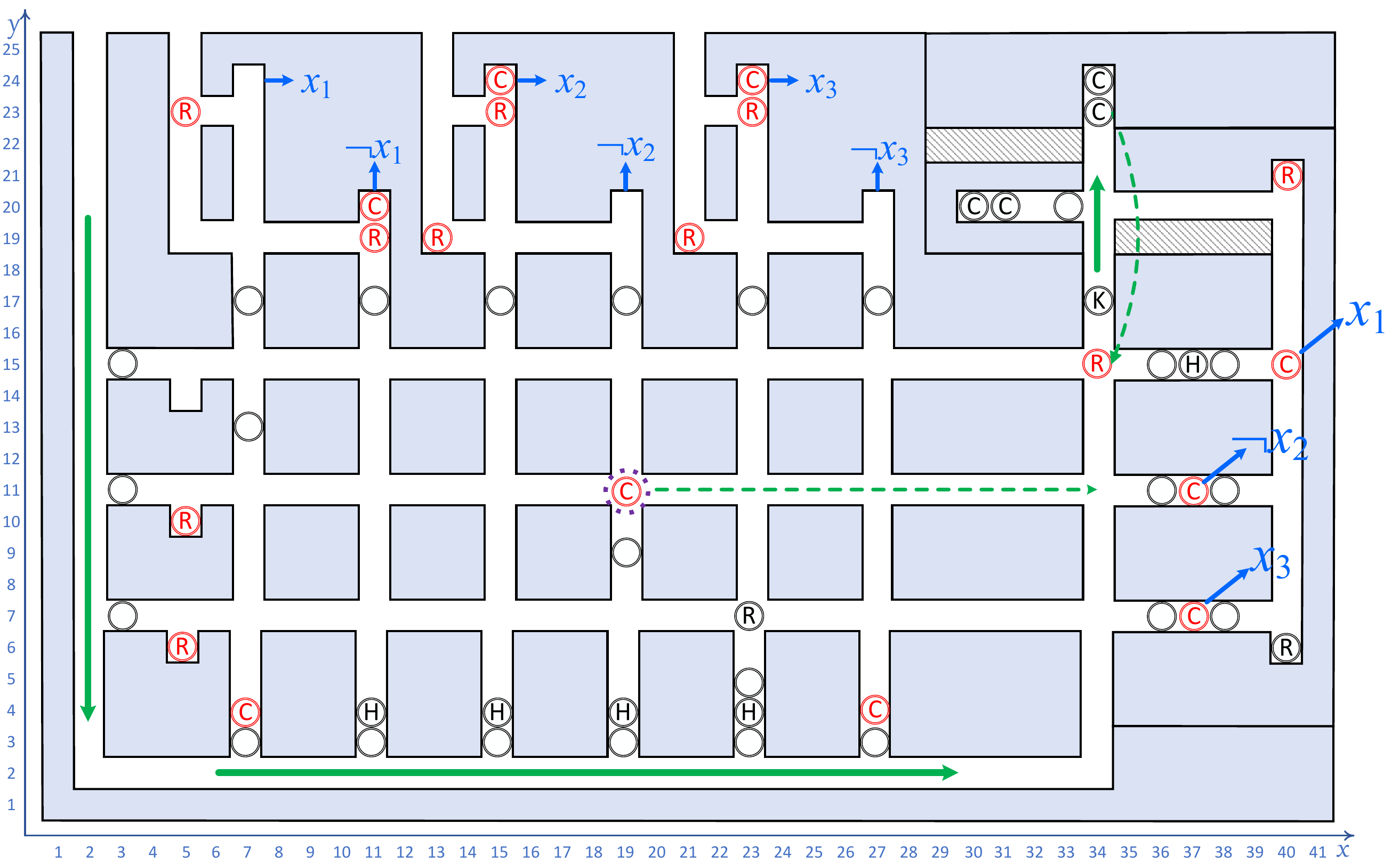}
\caption{Red delivers unintended moves.}
\label{Fig.13}
\end{minipage}
\end{figure}

So we set force-to-bottom gadgets. The purpose of the gadget is to compel variable cannons to ultimately move to the bottom of the constructed game position. As shown in Figure \ref{Fig.14}, the area within the red dashed box is the force-to-bottom gadget. When black king moves directly below a variable cannon, Red cannot continue to use directional cannons to check Black. If this variable cannon is free to move, Red must use it to move downward to deliver a check; if the variable cannon is not free to move, Red uses the cannon below it to move left or right to check. For example, when black king moves to (4,7), Red must use variable cannon-$x_i$ at (4,21) to move downward to (4,10) to check Black. When black king moves to (6,7), red cannon at (6,3) moves to (6,5) to check and replaces the cannon at (2,5) as the new directional cannon. When black king moves to (8,7), Red can only use the cannon at (6,2) to move right to (8,2) to deliver a check. When black king moves to (10,7), red cannon at (10,3) moves to (10,5) to check and becomes the new directional cannon.

\begin{figure}[H]
\centering
\begin{minipage}[t]{0.3\textwidth}
\centering
\includegraphics[width=\textwidth]{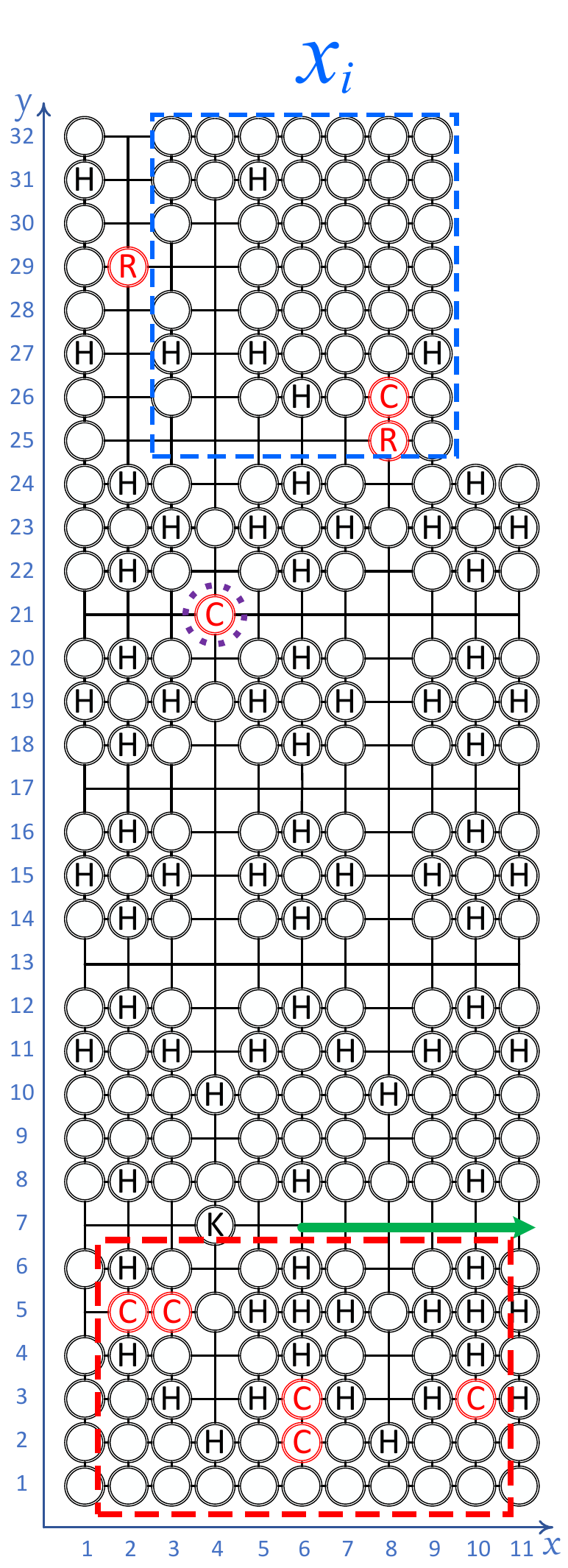}
\caption{The force-to-bottom gadget.}
\label{Fig.14}
\end{minipage}
\end{figure}

\subsection{Piecing together}
\label{subsec:piecing together}
We construct a game position for each formula in 3-SAT, the game position is composed of the mate-in-one gadget, the start gadget, the turn gadget, the variable gadget, the force-to-bottom gadget and the clause gadget, etc. For example, the formula $\Phi$ $=( x_1 \vee \lnot x_2 \vee x_3)\land(x_1\vee x_2\vee \lnot x_3)$. As shown in Figure \ref{Fig.15}, the game position is constructed based on formula $\Phi$. The two orange dashed boxes represent the two clauses, where the shorter box corresponds to clause $( x_1 \vee \lnot x_2 \vee x_3)$, the longer box corresponds to clause $(x_1\vee x_2\vee \lnot x_3)$.

\begin{lemma}
  \label{lemma-3}
   If there exists an assignment of values to a set of variables that satisfies the formula, then Red can continue checking Black in every move until finally checkmating black king, Red wins.
\end{lemma}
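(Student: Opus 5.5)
The proof is constructive: from a satisfying assignment $\sigma$ we describe an explicit sequence of Red moves. We then check, gadget by gadget in the order the black king traverses them, that every Red move is a check and that every legal Black reply keeps the king on the prearranged route. Since Red checks on every move, Black is never free to move a horse in the mate-in-one gadget, so the mate-in-one gadget stays dormant throughout. It therefore suffices to show that Red never runs out of checks and that the final position is checkmate.

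\textbf{Start gadget and variable gadgets.} Red opens with the cannon capture described in the start gadget and sets up the directional cannons. From then on Red alternates the two directional cannons, which forces the king unidirectionally. At the variable gadget for $x_i$, Red moves the rook corresponding to $\sigma(x_i)$: the rook at $(6,8)$ to $(4,8)$ if $\sigma(x_i)$ is true, and the rook at $(10,4)$ to $(4,4)$ otherwise. This frees the matching variable cannon. Red never uses the refuted alternatives, such as the directional cannon capturable by the black cannon or the rook advance capturable at $(3,14)$. The turn gadgets are handled by the fixed cannon sequence described in the turn gadget subsection. We must check that each listed Black defence (the black cannon interpositions) leaves the king still forced along the route; this is exactly the case analysis given in those subsections.

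\textbf{Downward pass.} When the king stands below a variable cannon whose literal is satisfied by $\sigma$, Red moves that cannon down. It captures the key rooks on its vertical line, one at each row where the king is level with a corresponding literal cannon, and each capture releases the red rook at the lower left of that key rook. The force-to-bottom gadget then obliges Red to push the variable cannon to the bottom, using the cannons below it to check in turn as in Figure 14. This removes any possibility of the unintended lines of Figure 13. Red plays these as honest checks, so no analysis of Red deviations is needed. We only need that Black's replies are forced, which the gadget descriptions give.

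\textbf{Upward pass through the clauses, and the final mate.} Take each clause and apply Lemma \ref{lem_literal}. For a literal satisfied by $\sigma$, Red checks with the released rook from the left. For an unsatisfied literal, Red moves the literal cannon. Because $\sigma$ satisfies the clause, at least one literal cannon remains unmoved. So when the king reaches the double-cannon row, the black rook cannot capture the checking red rook, and the king must move up, as in the second scenario of the clause-gadget lemma. By that lemma, the king passes every clause gadget. After the last clause the route ends in a pocket where Red delivers checkmate; we would verify this final configuration directly from the piecing-together figure.

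The main obstacle is \emph{completeness of Black's options}. At every step we must ensure Black has no alternative defence, such as an interposition by some black elephant, horse or key rook elsewhere on the board, that lets the king leave the route or lets Black escape the checks. We would handle this by arguing that gadgets are separated by walls of elephants, so that pieces outside the current gadget cannot reach the checking line. We would also use the fact that key rooks on unsatisfied lines are shielded by the unreleased red rooks.
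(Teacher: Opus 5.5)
Your proposal follows the paper's proof: both trace Red's assignment-driven checking strategy gadget by gadget (the chosen rook in each variable gadget, key-rook captures by the freed variable cannons, the force-to-bottom checks, released rooks versus literal cannons in the clause gadgets, and the final mate by the directional cannons), and you do it for a general satisfying assignment where the paper only walks through the example of Figure~\ref{Fig.15}. One small correction: the key-rook captures happen during the downward pass, when the king is level with each literal-cannon row, whereas the ``directly below a variable cannon'' trigger belongs to the separate rightward pass through the force-to-bottom gadgets, which your sketch merges into the downward pass.
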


\begin{proof}
We proof using the above-mentioned formula $\Phi$ and the variable assignments $x_1$=\textbf{true}, $x_2$=\textbf{true}, $x_3$=\textbf{false} as an example. After the game begins, two red directional cannons drive black king to move leftward, sequentially passing through three variable gadgets. Based on the assignment of three variables, red rooks at (33, 37), (21, 41), (13, 41) move leftward in turn to check Black. Upon completing the variable assignment, black king passes through a turn gadget and shifts to move downward.

During black king downward movement, variable cannon-$x_1$ at (13, 42) captures downward key rook at (13, 33), (13, 21); variable cannon-$x_2$ at (21, 42) captures downward key rook at (21, 17); and variable cannon-$\lnot x_3$ at (33, 38) captures downward key rook at (33, 13). These key rooks are marked with purple dashed circles in Figure \ref{Fig.15}. Subsequently, black king passes through a turn gadget and shifts to move rightward.

During black king rightward movement, it sequentially passes through three force-to-bottom gadgets. Variable cannon-$x_1$ at (13, 21) moves to (13, 10), variable cannon-$x_2$ at (21, 17) moves to (21, 10), and variable cannon-$\lnot x_3$ at (33, 13) moves to (33, 10). Additionally, red cannon at (15, 2) moves to (17, 2), red cannon at (23, 2) moves to at (25, 2), and red cannon at (31, 2) moves to (29, 2). Subsequently, black king passes through a turn gadget and shifts to move upward.

\begin{figure}[H]
\begin{minipage}[t]{\textwidth}
\centering
\includegraphics[width=\textwidth]{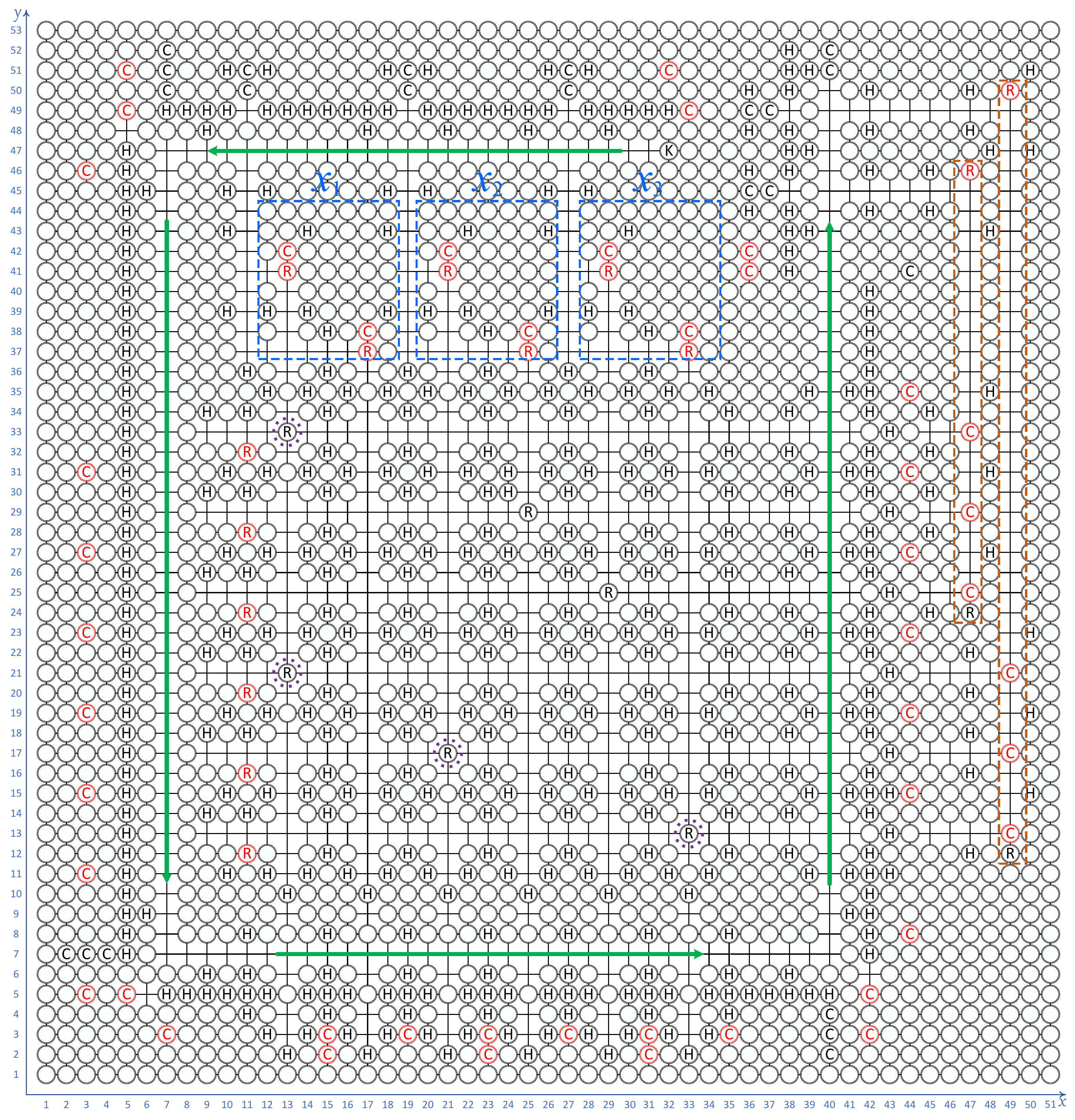}
\caption{The piecing together.}
\label{Fig.15}
\end{minipage}
\end{figure}

During black king upward movement, it sequentially passes through two clause gadgets. When black king passes through clause gadget corresponding to $(x_1\vee x_2\vee \lnot x_3)$, red rooks at (11, 12), (11, 16), (11, 20) move upward and check Black in sequence, while literal cannons at (49, 13), (49, 17), (49, 21) does not move. The black king passes through smoothly. When black king passes through clause gadget corresponding to $( x_1 \vee \lnot x_2 \vee x_3)$, only the literal cannon at (47, 33) is not used to check Black, so black king also passes through smoothly.

Finally, the unidirectional tunnel for the black king comes to an end after leaving the clause gadgets, so Red checkmates Black by with directional cannons.
\end{proof}

\begin{lemma}
  \label{lemma-4}
  If Red can continue checking Black in every move until finally checkmating black king, then there exists an assignment of values to the variables that satisfies the formula.
\end{lemma}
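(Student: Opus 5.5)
We argue by contraposition along the forced route, reading an assignment off Red's winning play. Fix a winning line of play for Red in which every Red move is a check. We read off an assignment from it: in the variable gadget for $x_i$ (Subsection \ref{subsec:the variable gadget}), when the black king reaches the square where the directional cannons cannot check (the black cannon would capture a directional cannon and leave Red unable to alternate), Red's only safe check is with one of the two red rooks. We set $x_i=\textbf{true}$ if the rook corresponding to $x_i$ moves and $x_i=\textbf{false}$ otherwise. The first step is to verify that this is well defined: exactly one rook is moved in each variable gadget, and once one rook has moved to the vertical line, the other can never later deliver a useful check, because it is blocked and later attempts to use the moved rook are answered by the black cannon capture described there. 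We also use the mate-in-one gadget (Subsection \ref{subsec:the mate-in-one gadget}) throughout: any non-checking move loses immediately, so Red's play is forced into the prescribed route up to a bounded set of alternatives. Those alternatives are the ones listed in the turn, variable, clause and force-to-bottom gadgets, and each of them either transposes or loses.

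Next we show that the black king must traverse every gadget in order and cannot be checkmated before leaving the last clause gadget. This is the main obstacle. For each gadget we would enumerate the Red pieces able to give check at each king position and check that every check other than the intended ones is answered by a capture: black cannons capture rooks entering the channel, black cannons on the vertical line $x=6$ handle the turn gadget, and the black horse replaces the elephant at (8,7). The capture leaves Red either without a follow-up check or back on the intended line. The delicate case is the unintended play of Subsection \ref{subsec:the force-to-bottom gadget}. Here we argue that every variable cannon that moves at all is driven to the bottom row by the force-to-bottom gadgets before the upward clause phase, because when the king stands beneath it, the only available check uses that cannon or the cannon below it. Consequently no variable cannon or leftover key rook can enter the upward channel to mate prematurely. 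We would organize this as an invariant on the position at each gadget boundary: which red pieces remain movable, and where they are.

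Given this invariant, the variable cannon of a literal consistent with the chosen assignment is exactly the one that captures the key rooks on its vertical line. Hence the red rook beside a literal cannon is released precisely when the assignment satisfies that literal. This is Lemma \ref{lem_literal} read in both directions, and by the invariant the released rooks are the only extra checking resources available. Now suppose some clause is unsatisfied. Then when the king crosses each of its three literal rows, the rook is not released and Red's only check is the literal cannon, so all three literal cannons move away. At the row of the black double cannons, Red must check with the rook from above, and Black captures it with his rook, as in the first scenario of the preceding clause lemma (Figure \ref{Fig.10}). Red then has no check, and the mate-in-one gadget gives Black the win, contradicting the assumption. Therefore every clause is satisfied by the extracted assignment.
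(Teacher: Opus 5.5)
Your proposal follows essentially the same route as the paper: read the assignment off which rook checks in each variable gadget, then note that in each clause gadget some literal cannon must stay unmoved (otherwise the black rook captures the red rook and Red has no check), and an unmoved literal cannon means its literal is satisfied. The differences are only in presentation: you phrase the clause step as a contradiction and spell out the no-premature-mate invariant from the force-to-bottom gadget, which the paper's proof takes for granted; also, you only use the direction ``literal unsatisfied $\Rightarrow$ rook not released'', so you should not claim release happens \emph{precisely} when the literal is satisfied, since Red may choose not to capture a key rook.
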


\begin{proof}
If Red continues checking black king, forcing black king to move downward, leftward, and upward along the path, ultimately checkmating black king, then there exists a set of assignments that can satisfy the formula. When black king passes through a variable gadget, if Red uses the left rook to check, the corresponding variable is assigned true; if Red uses the right rook to check, the corresponding variable is assigned false. When black king passes through a clause gadget, if a literal cannon is not used to check, the corresponding literal is true; if a literal cannon is used to check, the corresponding literal is false. In a clause gadget, as long as one of the three literal cannons is not used, black rook cannot capture red rook, so the assignment satisfies the corresponding clause. Thus, the formula is satisfied.
\end{proof}

\begin{proof}[Proof of Theorem \ref{thm_main}]
By combining Lemma \ref{lemma-3} and Lemma \ref{lemma-4}, a 3-CNF formula is satisfiable if and only if Red can win the corresponding position of the Chinese Chess game by chasing the king. By the NP-completeness of the $3$-SAT problem (Theorem \ref{thm_sat}), the king chasing problem in Chinese Chess is NP-hard. This completes the proof.
\end{proof}

\section{Conclusion}
\label{sec:conclusion}
We investigate the computational complexity of king chasing problem in Chinese Chess. We reduce the classical 3-SAT problem to the problem and proving that king chasing problem in Chinese Chess is NP-hard. Since the problem is obviously in EXPTIME, a natural next step for future work is to prove PSPACE-hardness or even EXPTIME-hardness of king chasing problem in Chinese Chess.

\bibliographystyle{elsarticle-num.bst}
\bibliography{refs}

\end{document}